\newtheorem{theorem}{Theorem}[section]
\newtheorem{lemma}[theorem]{Lemma}
\theoremstyle{definition}
\newtheorem{definition}[theorem]{Definition}
\theoremstyle{remark}
\newtheorem{remark}[theorem]{Remark}
\numberwithin{equation}{section}
\newcommand{\R}{{\mathbb{R}}}
\newcommand{\pa}{\partial}
\newcommand{\grad}{\nabla_x}
\newcommand{\flux}{\text{flux}}
\newcommand{\gab}{g^{\alpha\beta}}
\newcommand{\hab}{h^{\alpha\beta}}
\newcommand{\pab}{\partial_\beta}
\newcommand{\paa}{\partial_\alpha}
\newcommand{\pat}{\partial_t}
\newcommand{\pai}{\partial_i}
\newcommand{\paj}{\partial_j}
\newcommand{\pao}{\pa_\omega}
\newcommand{\nua}{\nu_\alpha}
\newcommand{\eps}{\epsilon}
\newcommand{\la}{\langle}
\newcommand{\ra}{\rangle}
\newcommand{\im}{\la x \ra}
\newcommand{\ls}{\lesssim}
\newcommand{\gs}{\gtrsim}
\newcommand{\bd}{\text{BD}_h}
\newcommand{\gext}{\Gamma^{\text{ext}}}
\newcommand{\hll}{h^{\underline L\underline L}}
\newcommand{\nd}{\underline L}
\newcommand{\ed}{|\nabla u|^2}
\newcommand{\LE}{\|u\|^2_{LE^1[T_1,T_2]}}
\begin{document}

\title{Scattering for critical wave equations with variable coefficients}

\author{Shi-Zhuo Looi}
\address{Department of Mathematics, University of Kentucky, Lexington, 
  KY  40506}
\email{shizhuo.looi@uky.edu}
\author{Mihai Tohaneanu}
\address{Department of Mathematics, University of Kentucky, Lexington, 
  KY  40506}
\email{mihai.tohaneanu@uky.edu}

\begin{abstract}

We prove that solutions to the quintic semilinear wave equation with variable coefficients in $\R^{1+3}$ scatter to a solution to the corresponding linear wave equation. The coefficients are small and decay as $|x|\to\infty$, but are allowed to be time dependent. The proof uses local energy decay estimates to establish the decay of the $L^6$ norm of the solution as $t\to\infty$. 
\end{abstract}

\maketitle

\vspace{-0.35in}

\tableofcontents

\section{Introduction}

In Minkowski space, solutions of the equation $\Box u = |u|^{p-1} u$ with $\Box=-\pat^2 + \Delta$ have a conserved and positive-definite energy $$E(t) = \int_{\R^3} \frac{1}{2}|\nabla u(t,x)|^2 + \frac{1}{p+1}|u(t,x)|^{p+1} \,dx$$ and the scaling symmetry $$u(t,x)\mapsto \lambda^{\frac{2}{1-p}} u(\frac{t}{\lambda}, \frac{x}{\lambda}).$$ In three dimensions, the exponent $p=5$ is called the energy-critical exponent, because solutions of the equation have an energy that is invariant under the scaling symmetry. 

For the Cauchy problem with initial data in the energy space $\dot H^1\times L^2$, local well-posedness is proven for $1<p\leq 5$ by Strichartz estimates. Global existence for small initial data is a straightforward adaptation of the proof of local existence. In addition, there is global existence for large initial data due to the existence of a blowup criterion, which informally says that the energy cannot concentrate at any point in spacetime. Moreover, given any finite-energy initial data there is a unique global solution with finite energy lying in $L^4L^{12}([0,\infty)\times\R^3)$; these solutions are known as strong (or Shatah-Struwe) solutions. See \cite{gsv},\cite{g1},\cite{g2},\cite{g3},\cite{k},\cite{n1},\cite{n2},\cite{p},\cite{r},\cite{ss1},\cite{ss2},\cite{s} for details and more. The results in \cite{bs} and \cite{bg} then combine to prove scattering of solutions with finite-energy initial data using a profile decomposition, which describes the failure of a sequence of uniformly bounded solutions to the free wave equation to be compact in the sense of Strichartz estimates. 

This paper considers the equation
\begin{equation}\label{main}
\begin{cases}
Pu(t,x) = u(t,x)^5 & (t,x) \in (0,\infty) \times \R^3, \quad P = \paa\gab\pab \\
u[0] \in \dot H^1 \times L^2
\end{cases}
\end{equation}

Global existence and uniqueness of strong solutions (lying in $C(\R_t, \dot H^1) \cap L_{loc}^5L^{10}$) was shown in \cite{im} in the stationary setting. A similar result for classical solutions in the non-stationary setting was shown in \cite{zl}. These results require minimal assumptions on the coefficients, as eliminating the blowup scenario only requires local-in-time arguments.

Our main theorem establishes scattering of strong solutions to \eqref{main} for certain small, asymptotically flat perturbations of the Minkowski metric.

\begin{theorem} \label{1.1}
Let $\gab(t,x)$ be a Lorentzian metric, let $P = \paa\gab\pab$, and let $h = g - m$ denote the perturbative terms of the metric $g$. The unique global strong solution to the Cauchy problem \eqref{main} scatters in the energy space $\dot H^1\times L^2$ provided that
\begin{equation}\label{hest}
|h|\ls \eps \frac{\la t-r \ra^{1/2}}{\im^\gamma \la t+r\ra^{1/2}},
\end{equation}
 \begin{equation}\label{hbadest}
 |\hll| \lesssim \eps\frac{\la t-r \ra}{\im^\gamma \la t+r\ra},
 \end{equation}
\begin{equation}\label{derhest}
|\pa^J h| \ls \eps \frac{1}{\im^{|J|+\gamma}} \ \ \text{for } |J| = 1 \text{ and } |J| = 2
\end{equation}
 where $\gamma>0$ is an arbitrarily small constant and $\epsilon>0$ is a sufficiently small constant. In these assumptions, $\pa^J h$ denotes $\pa^J \hab$ for all multi-indices $\alpha$ and $\beta$, and $\hll=h^{\alpha\beta}\nd_\alpha\nd_\beta$, where $\nd = \sum_{i=1}^3\frac{x^i}{|x|}\pa_{x_i} - \pat$ and we lower indices with respect to the Minkowski metric.
\end{theorem}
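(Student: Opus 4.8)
The plan is to reduce the theorem to the finiteness of the global Strichartz norm $\|u\|_{L^5_t L^{10}_x([0,\infty)\times\R^3)}$. Granting this, Duhamel's formula for $P$ gives $(u,\pat u)(t) = S_P(t)u[0] + \int_0^t S_P(t-s)\big(0,u(s)^5\big)\,ds$, where $S_P$ is the linear propagator; since $\|u^5\|_{L^1_t L^2_x} = \|u\|_{L^5_t L^{10}_x}^5 < \infty$ and $S_P$ is bounded on $\dot H^1\times L^2$ (from the energy bound below), the quantity $S_P(-t)(u,\pat u)(t)$ is Cauchy in $\dot H^1\times L^2$ as $t\to\infty$, and its limit $v_+$ satisfies $\|(u,\pat u)(t)-S_P(t)v_+\|_{\dot H^1\times L^2}\to 0$, which is the asserted scattering to the $P$-linear flow. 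The finiteness of the global $L^5_tL^{10}_x$ norm will follow from three facts: (i) global-in-time Strichartz estimates for $P$; (ii) a uniform bound $\sup_{t\ge 0}E(t)\lesssim E(0)$ on the energy $E(t)=\int \tfrac12|\pat u|^2+\tfrac12|\grad u|^2+\tfrac16 u^6\,dx$; and (iii) the decay $\|u(t)\|_{L^6_x}\to 0$ as $t\to\infty$. Indeed, interpolating $\|u\|_{L^{10}_x}^5\le \|u\|_{L^6_x}\|u\|_{L^{12}_x}^4$ and combining (i) with $\|u^5\|_{L^1_t L^2_x}=\|u\|_{L^5_tL^{10}_x}^5$ yields, for $Y(T):=\|u\|_{L^4_tL^{12}_x([T,\infty))}+\|u\|_{L^5_tL^{10}_x([T,\infty))}$, the inequality $Y(T)\lesssim E(0)^{1/2}+\big(\sup_{t\ge T}\|u(t)\|_{L^6_x}\big)\,Y(T)^4$; by (iii) the coefficient $\sup_{t\ge T}\|u(t)\|_{L^6_x}$ can be made as small as desired relative to $E(0)$ by taking $T$ large, so a continuity argument in the upper endpoint gives $Y(T)<\infty$, and the local theory handles $[0,T]$.

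For (i) and (ii): the energy identity for $Pu=u^5$ with multiplier $\pat u$, using the defocusing sign of $u^5$, gives $\tfrac{d}{dt}E(t)=-\tfrac12\int(\pat\gab)\paa u\,\pab u\,dx$, hence $|\tfrac{d}{dt}E(t)|\lesssim \eps\int\im^{-1-\gamma}\big(|\pat u|^2+|\grad u|^2\big)\,dx$ by \eqref{derhest}, and the right-hand side is dominated by the integrand of the local energy norm $\|u\|_{LE^1}$. Conversely, the local energy decay estimate for $P$ — which holds under exactly the hypotheses \eqref{hest}--\eqref{derhest}, the improved decay \eqref{hbadest} of $\hll$ being what allows the Morawetz multiplier argument to survive on a non-stationary perturbation of Minkowski — reads $\|u\|_{LE^1[0,T]}^2\lesssim \sup_{[0,T]}E(t)+\|u^5\|_{LE^*[0,T]}\|u\|_{LE^1[0,T]}+(\eps\text{-small terms})$. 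Combining the two on an interval $[0,T]$ where $\|u\|_{L^5_tL^{10}_x}$ is a priori finite (true for every $T$ since $u$ is a strong solution), with $\eps$ absorbing all metric error terms, produces a closed a priori estimate for $\sup_{[0,T]}E(t)+\|u\|^2_{LE^1[0,T]}$ and hence (ii). Global Strichartz estimates for $P$, i.e.\ (i), then follow from the flat Strichartz estimates by treating $\paa(\hab\pab u)$ perturbatively and absorbing it through the local energy decay bound, in the spirit of the Metcalfe--Tataru parametrix construction.

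The heart of the proof is (iii), and this is where local energy decay enters essentially. By density it suffices to treat compactly supported data, so finite speed of propagation (valid since $g$ is a small perturbation of $m$) confines $u(t,\cdot)$ to $|x|\lesssim t+R_0$. The radial Morawetz estimate for $P$ — again using smallness of $\eps$ and the structural bound \eqref{hbadest} — controls $\int_0^\infty\int_{\R^3}\im^{-1-\gamma}|u|^6\,dx\,dt$, since for the \emph{critical} power the potential energy appears with a favorable sign in the associated energy--momentum computation; this gives $\int_0^\infty\int_{|x|\le A}|u(t,x)|^6\,dx\,dt<\infty$ for each fixed $A$, and, using that near the light cone the weight is comparable to $t^{-1-\gamma}$ together with the boundedness of the energy flux through outgoing cones, a corresponding integrated statement for $|x|\sim t$. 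One then upgrades these integrated-in-time bounds to the pointwise-in-time decay $\|u(t)\|_{L^6_x}\to0$ by a continuity/ODE argument, controlling $\tfrac{d}{dt}\|u(t)\|^6_{L^6_x}$ via the equation and the finiteness of the local Strichartz norm on bounded intervals — this is the variable-coefficient analogue of the Bahouri--Shatah proof that the potential energy of the defocusing energy-critical wave equation vanishes at infinity. The main obstacle is precisely the interdependence just described: $g$ being time-dependent there is no conserved energy and no exact Morawetz identity, so the energy identity, the Morawetz estimate, and the conversion of integrated to pointwise decay all carry error terms, while criticality makes $u^5$ borderline in every relevant norm (in particular it is not controlled in $LE^*$ or in $L^1_tL^2_x$ by the energy alone); the whole scheme therefore closes only as a simultaneous bootstrap of $E(t)$, $\|u\|_{LE^1}$, and the Strichartz norm, and only because $\eps$ is small and the hypotheses — above all \eqref{hbadest} — are exactly of the form needed for the Morawetz multiplier estimate to hold on the perturbed background.
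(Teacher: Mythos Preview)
Your overall architecture --- reduce scattering to finite global $L^5_tL^{10}_x$ norm, and obtain the latter from (i) global Strichartz for $P$, (ii) uniform energy bounds, and (iii) $\|u(t)\|_{L^6}\to 0$ --- matches the paper, and your treatment of (i), (ii), and the passage from (iii) to finite Strichartz norm is fine. The gap is in your argument for (iii).

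What you propose for $L^6$ decay --- the Morawetz bound $\int_0^\infty\!\int\langle r\rangle^{-1}u^6\,dx\,dt<\infty$ followed by an ``ODE/continuity argument'' on $\tfrac{d}{dt}\|u(t)\|_{L^6}^6$ --- is not the Bahouri--Shatah mechanism and does not close. Near the light cone the weight $\langle r\rangle^{-1}$ is only $\sim t^{-1}$, so the integrated bound gives at best $\int t^{-1}\|u(t)\|_{L^6}^6\,dt<\infty$, which does not force pointwise decay; and $\tfrac{d}{dt}\int u^6=6\int u^5\partial_t u$ has no usable sign or smallness at critical regularity. What Bahouri--Shatah actually do, and what the paper adapts, is to multiply the equation by the scaling-type field $Xu=(t+c)\partial_t u + x\cdot\nabla_x u + u$ inside a truncated forward cone $\{|x|<t+c,\ T_1\le t\le T_2\}$. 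The resulting conformal-type energy $P(T)$ satisfies $P(T_2)\gtrsim T_2\int_{|x|<T_2+c}u^6(T_2)$, and the identity gives
\[
T_2\!\int u^6(T_2)\ \lesssim\ (T_1+c)\,E(T_1)\ +\ T_2\,G\bigl(\mathrm{flux}[T_1,T_2]\bigr)\ +\ T_2\,\langle R\rangle\,\|u\|_{LE^1[T_1,T_2]}^2\ +\ T_2^{1-\gamma}E,
\]
so dividing by $T_2$ and sending $T_2\to\infty$ (with $T_1$, then $R$, chosen so that the LED tail and the exterior energy are small) yields the decay. The flux through the lateral surface is controlled by a separate $\partial_t u$ multiplier computation in the \emph{exterior} of the cone, and the parameter $c\in[R,R+1]$ is chosen by averaging so that the LED norm bounds the lateral integral of $\langle x\rangle^{-1-\gamma}|\nabla u|^2$.

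You have also mislocated where the structural hypothesis \eqref{hbadest} on $h^{\underline L\underline L}$ enters. The local energy decay estimate uses only $|h|+\langle r\rangle|\partial h|\lesssim\eps\langle r\rangle^{-\gamma}$; the extra null decay \eqref{hbadest} and the intermediate decay \eqref{hest} are needed precisely on the lateral cone surface $\{|x|=t+c\}$, where the boundary terms $\nu_\alpha h^{\alpha\beta}\partial_\beta u\cdot\partial_t u$ and $\nu_\alpha h^{\alpha\beta}\partial_\beta u\cdot Xu$ contain the transversal factor $\underline L u$ with coefficient $h^{\underline L\underline L}$. Without \eqref{hbadest} these lateral terms cannot be absorbed into either the flux (which controls only tangential derivatives) or the LED norm. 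This lateral estimate is the new difficulty absent in the Minkowski case, and it is the reason the theorem requires the specific near-cone improvements \eqref{hest}--\eqref{hbadest} rather than just the isotropic decay \eqref{derhest}.
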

		
		This says that the unique global solution of the non-linear problem on small, asymptotically flat perturbations of Minkowski space that have appropriate decay at infinity behave, in the asymptotic sense, like the solution to the linear homogeneous problem $Pu =0$, at least in the energy space. 
		
\begin{remark}
	Theorem~\ref{1.1} also holds if we replace $P$ by the geometric wave operator 
\[
\Box_g=\frac{1}{\sqrt{|g|}} \paa \sqrt{|g|} \gab \pab, \quad |g|:=|\det \gab|.
\] 

Indeed, in the estimates below one integrates with respect to the volume form $\sqrt|g| dt dx$, and uses the fact that $\sqrt{|g|}\gab\approx\gab$.  There are extra error terms of the form $\pa \sqrt{|g|} u^6$ arising, which can be absorbed since by \eqref{derhest} we have
\[
\pa \sqrt{|g|} \lesssim \frac{\epsilon}{\la x\ra^{1+\gamma}}
\]
	
\end{remark}

\begin{remark} 
A key tool in proving scattering on variable-coefficient backgrounds is local energy decay. Such an estimate was proven in \cite{m}, \cite{sr}, and \cite{kss} for Minkowski space and in \cite{ms}, \cite{mt1} for perturbations of Minkowski space, and became a valuable tool in the study of both linear and nonlinear problems. In particular, they imply Strichartz estimates on certain variable coefficients backgrounds, see  \cite{mt2}. Our result is one of several showing that local energy decay is fruitful for understanding the long-time behavior and asymptotics of solutions to nonlinear dispersive equations on variable coefficients backgrounds.
\end{remark}

\begin{remark}
For the energy-critical problem on Minkowski space, global a priori estimates were proven in \cite{n2}, from which scattering for the wave and Klein-Gordon equations was deduced. Profile decompositions akin to \cite{bg} have been shown for waves on hyperbolic space in \cite{los}. Analogous results in the exterior of obstacles were obtained in \cite{kss}, \cite{smithsogge}, \cite{blp}. Scattering on Riemannian manifolds for a class of non-trapping obstacles close to the two-convex framework has been shown as well for the energy-critical non-linear problem (\cite{l}).

For the nonlinear Schr\"odinger equation, the energy-critical problem for the defocusing quintic problem with initial data in the energy space for small, compactly supported perturbations of the Euclidean metric also exhibits scattering to linear solutions for all finite-energy data, as shown in \cite{j}. In the exterior of a strictly convex obstacle, global well-posedness and scattering for all initial data in the energy space for the NLS was shown in \cite{kvz}. 
\end{remark}

\section{Notation and Preliminaries} \label{notation}
We fix the spatial dimension to be $d=3$ and define $\Box = - \pa_t^2+\Delta$ and $P = \paa \gab \pab$ where $g = g(t,x)$ is a Lorentzian 
metric; we also write $m$ for the Minkowski metric and $h = g-m$. We write either $X \lesssim Y$ or $X=O(Y)$ to indicate that $$|X| \leq CY$$ (rather than $X\leq CY$) for some absolute constant $C$ which may vary by line. Similarly, $X\approx Y$ means that there are constants $0 < C_1 < C_2$ so that
\[
C_1 |X| \leq |Y| \leq C_2 |X|.
\]
We let $\la r \ra = \la x \ra = (1 + |x|^2)^{1/2}$. We write $\nabla = (\pat,\nabla_x)$ for the spacetime gradient. Throughout the paper, we use the Einstein summation convention, and we let Greek (resp. Latin) indices denote spacetime (resp. space) indices. We write $u[T] = (u(T,x), \pat u(T,x))$.

The energy of the solution $u$ is defined to be 
	$$E(t) = \int_{\R^3} \frac{1}{2}|\nabla u(t,x)|^2 + \frac16 u^6\,dx.$$

We will also use the notation
$$E_K(t) := \int_{ K} \frac{1}2|\nabla u(t,x)|^2 + \frac16 u^6 \,dx$$	
for some subset $K$ of $\R^3$.

For any $(f, g)\in \dot H^1 \times L^2$, we denote by $S(t,s)(f, g)$ the unique solution $u\in C(\R_t, \dot H^1)$ with $\pa_t u \in C(\R_t, L^2)$ to the equation
\begin{equation}\label{linear}
\begin{cases}
P u = 0  & (t,x) \in (s,\infty) \times \R^3 \\
u[s] = (f, g)
\end{cases}\end{equation}

Let
\[
X=\{C(\R_t, \dot H^1)\cap L_{loc}^5L^{10}\}\times C(\R_t, L^2)
\]
and for any closed, finite interval 
\[
X(I) = \{C(I, \dot H^1)\cap L^5(I)L^{10}\}\times C(I, L^2)
\]

Consider the Cauchy problem 
\begin{equation} \label{cp3}
\begin{cases}
Pu(t,x) = u(t,x)^5 & (t,x) \in (0,\infty) \times \R^3 \\
u[0] \in \dot H^1 \times L^2
\end{cases}
\end{equation}

By Duhamel's formula, classical solutions to \eqref{cp3} satisfy
\begin{equation}\label{strong}
u(t) = S(t, 0)u[0] + \int_0^t \frac{1}{g^{00}}S(t,s)(0, u^5(s)) ds 
\end{equation}

We can thus define a strong solution to be a solution of \eqref{strong} so that $(u, \partial_t u)$ also lies in $X$.

The results of \cite{im} and \cite{zl} show that, for smooth initial data, there is a unique global classical solution to \eqref{cp3} that is also a strong solution. Moreover, this result is extended to initial data in the energy space in \cite{im} for time-independent coefficients, and the same argument can be used to prove it in the time-dependent case. We will be interested in studying the asymptotic properties of the unique strong solution in the energy space, in particular the fact that it approaches a solution to the linear equation in the energy space.

\begin{definition}[Scattering in the energy space]
We say that the solution $u$ to \eqref{cp3} \textit{scatters in the energy space} if there exists $(f,g) \in \dot H^1 \times L^2$ such that 
$$ \lim_{t\to\infty} \| u(t) - S(t,0)(f,g) \|_{\dot H^1 \times L^2} = 0.$$
\end{definition}

\section{Energy conservation and local energy decay in Minkowski space}

In order to motivate the discussion that follows, we devote this section toward certain key results for the linear problem in the setting of Minkowski space. Consider the Cauchy problem
\begin{equation} \begin{cases} \label{cp1}
\Box u = 0 & (t,x) \in (0,\infty) \times \R^3 \\
u[0] \in \dot H^1 \times L^2
\end{cases} \end{equation}

The energy of the solution $u$ to \eqref{cp1} is defined to be 
	$$E^{lin}(t) = \int_{\R^3} \frac{1}{2}|\nabla u(t,x)|^2\,dx$$
and it is conserved: for all $T \geq 0$, $E^{lin}(T)=E^{lin}(0) = E$. 

	The solution $u$ to \eqref{cp1} also satisfies the local energy estimate
	$$\iint_{[0,T] \times \R^3} \frac{|\nabla u|^2}{\langle r\rangle^{1+\gamma}} + \frac{u^2}{\langle r\rangle^{3+\gamma}} \,dxdt \lesssim E$$ where $\gamma>0$ is an arbitrarily small fixed constant.
		This estimate is proven by multiplying both sides of the equation by $a(r)u + b(r)\pa_r u + C\pat u$ in the region $[0,T] \times \R^3$, with: $$b(r) = \sum_{j=0}^\infty 2^{-j\gamma}\frac{r}{r+2^j},$$ where for each $j$, this is a function catered to the region $r \approx 2^j$ with the factor $2^{-j\gamma}$, where the small number $\gamma>0$ is introduced in order to obtain convergence of the series; $a(r) = b(r)/r$; and $C>0$ is a constant chosen to be sufficiently large. 

More precisely, one obtains 
	\begin{align*}
	\iint_{[0,T]\times\R^3} \Box u b(r)\pa_r u \,dxdt &= - \iint b'(\pa_ru)^2 + \frac{b}{r} |\pa_\omega u|^2 + \frac{1}{2}(b' + 2\frac{b}{r})(u_t^2 - |\nabla_x u|^2) \,dxdt \\
			&\ \ + \int_{\R^3} - f \pa_r u \pat u |_0^T \,dx,
	\end{align*} where $|\pao u|^2 := |\grad u|^2 - |\pa_r u|^2$, and 
	
	\begin{align*}
	\iint_{[0,T]\times\R^3} \Box u a(r) u \,dxdt = \iint a(u_t^2-|\grad u|^2) + \frac{1}{2}\Delta au^2 \,dxdt + \frac{1}{2}\int_{\R^3} -au\pat u|_0^T\,dx.		
	\end{align*} 
	
	Since $a = O(\la r \ra^{-1})$ and $b = O(1)$, Hardy's inequality $\int_{\R^3} u^2/r^2 \,dx \lesssim \int_{\R^3} | \grad u|^2\,dx$ shows that there exists a sufficiently large constant $C>0$ such that 
	$$\int_{\{t\}\times\R^3}  b\pat u \pa_r u + a\pat u u + C|\nabla u|^2\,dx \approx E^{lin}(t)$$
for all $t \geq 0$. We obtain 
\[
	E^{lin}(T) + \iint_{[0,T]\times\R^3} \frac{1}{2}b'(u_r^2 + u_t^2) - ( \frac{1}{2}b' - \frac{b}{r} ) | \pao u|^2 - \frac{\Delta a}{2} u^2\,dxdt \lesssim E^{lin}(0)
\] 

One can check directly that
\[
b' \gtrsim r^{-1-\gamma}, \quad b/r - \frac{1}{2}b'  \gs r^{-1-\gamma}, \quad -\Delta a \gs r^{-3-\gamma}
\]
and thus
\begin{equation}\label{LEest}
\frac{1}{2}b'(u_r^2 + u_t^2) + (- \frac{1}{2}b' + \frac{b}{r} ) | \pao u|^2 - \frac{\Delta a}{2} u^2 \gtrsim \frac{|\nabla u|^2}{\langle r\rangle^{1+\gamma}} + \frac{u^2}{\langle r\rangle^{3+\gamma}}
\end{equation}

which finishes the proof.
	
\section{Uniform energy bounds and local energy decay for the nonlinear problem on perturbations}

We now come to certain key tools, analogous to the results presented in the previous section, that will be used in the proof of scattering for the non-linear problem on certain perturbations of Minkowski space that have appropriate decay at infinity.

If $g(t,x)$ is a non-stationary metric that satisfies certain decay conditions, and $u$ is a solution of the Cauchy problem \eqref{cp3}, then we have uniform energy bounds: with the energy now defined to be
	$$E(t) := \int_{ \{t\} \times \R^3 } \frac{1}{2}|\nabla u|^2 + \frac{1}{6}|u|^6\,dx,$$
	if $g$ and its derivatives satisfy certain decay conditions, then $$E(T) \lesssim E(0) := E$$ for some implicit constant that is independent of $T$. In fact, we may prove local energy decay and uniform energy bounds in one fell swoop for \eqref{cp3}, as the following proposition shows.

\begin{theorem}[Integrated local energy decay for the nonlinear Cauchy problem]
Let $u$ be a solution of \eqref{cp3} and let $|J| \leq 1$ be a multi-index. If $\pa^J \hab \lesssim \epsilon \la r \ra^{-|J|-\gamma}$ where $\gamma>0$ is an arbitrarily small constant and $\epsilon>0$ is a sufficiently small constant then \begin{equation}\label{LE}
\|u\|_{LE^1[T_1,T_2]}^2 + E(T_2) \lesssim E(T_1)
\end{equation} for some implicit constant that is independent of $T_1$ and $T_2$, where
\[
\|u\|_{LE^1[T_1,T_2]}^2 = \iint_{[T_1,T_2] \times \R^3} \frac{|\nabla u|^2}{\la r\ra^{1+\gamma}}+ \frac{u^2}{\la r\ra^{3+\gamma}} + \frac{|u|^6}{\la r\ra}\,dxdt
\]
\end{theorem}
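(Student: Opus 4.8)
The plan is to run the multiplier argument of Section~3 directly for the variable-coefficient operator, now with the quintic source term. Concretely, I would multiply $Pu=u^5$ by
\[
Xu := a(r)\,u + b(r)\,\pa_r u + C\,\pat u ,
\]
with $b$, $a=b/r$ and the constant $C$ exactly as in Section~3, and integrate over $[T_1,T_2]\times\R^3$. (A strong solution need not be smooth, so I would first establish the identity for smooth solutions and then pass to the limit via the well-posedness theory, approximating $u[0]$ in $\dot H^1\times L^2$; this step is routine.) It is cleanest to perform the integrations by parts with $P=\paa\gab\pab$ itself rather than first extracting $\Box$: in the resulting divergence identity $(Pu)(Xu)=\pa_\alpha J^\alpha+\mathcal B[u]+\mathcal E[u]$ the second derivatives of $u$ cancel, the Minkowski part of $\mathcal B[u]$ is the positive density of Section~3 (cf.\ \eqref{LEest}), the boundary currents $\int J^0\,dx$ evaluated on $\{t=T_j\}$ are controlled by $E(T_j)$ once $C$ is taken large and Hardy's inequality is used on the cross terms, and $\mathcal E[u]$ collects all the terms carrying a factor of $h$ or $\pa h$.

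To handle $\mathcal E[u]$ I would use only $|h|\ls\eps\anr^{-\gamma}$ and $|\pa h|\ls\eps\anr^{-1-\gamma}$, which is exactly what the hypothesis with $|J|\le1$ provides. Its interior terms are, schematically, $\pa h\,(\pa u)^2$ against a bounded weight, $h\,(\pa u)\,u$ against an $\anr^{-1}$ weight, and $\pa(h\,\pa a)\,u^2$, and each is pointwise $\ls\eps\big(\anr^{-1-\gamma}|\nabla u|^2+\anr^{-3-\gamma}u^2\big)\ls\eps\,\mathcal B[u]$; its boundary terms on $\{t=T_j\}$ are of the form $h\,(\pa u)^2$, $h\,a\,(\pa u)\,u$, or $h\,(\pa a)\,u^2$, and are bounded using $\anr^{-\gamma}\le1$ and Hardy by $\eps E(T_j)$. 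The one borderline term is $\iint \pa h\,(\pa u)(b\,\pa_r u)\,dxdt$: here the weight $b\approx1$ is of no help, and the term closes precisely because $|\pa h|$ alone supplies the needed $\anr^{-1-\gamma}$ — which is why only first-order decay of $h$ is required in this proposition.

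The new input is $\iint u^5(Xu)\,dxdt$. The piece $\iint C u^5\pat u\,dxdt=\tfrac C6\int u^6\,dx\big|_{T_1}^{T_2}$ combines with the energy-type boundary terms of the $C\pat u$ current to promote the linear energy density to $\tfrac12|\nabla u|^2+\tfrac16 u^6$; together with the interior bound on $\mathcal E[u]$ this yields the near-conservation estimate $E(T_2)\ls E(T_1)+\eps\iint\mathcal B[u]\,dxdt$. For the remaining two pieces, integrating $u^5 b\,\pa_r u=\tfrac16 b\,\pa_r(u^6)$ by parts in $x$ produces $\tfrac16\iint(b'+\tfrac{2b}{r})u^6\,dxdt$, while $\iint u^5\cdot au\,dxdt=\iint a u^6\,dxdt$; since $a=b/r$, these add $\iint\big(\tfrac23 a-\tfrac16 b'\big)u^6\,dxdt$ to the \emph{positive} side of the identity, next to $\mathcal B[u]$. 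What makes this work is that $\tfrac23 a-\tfrac16 b'\gs\anr^{-1}$: since $b$ is bounded, $b'\approx\anr^{-1-\gamma}$ is far smaller than $b/r=a\approx\anr^{-1}$, so the $au^6$ contribution is dominated and no unfavorable sign survives. Thus the $|u|^6/\anr$ term of $\LE$ is produced, with the good sign, for free.

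Assembling the pieces, the $b\,\pa_r u$ and $au$ multipliers give
\[
\LE \ \ls\ \iint\mathcal B[u]\,dxdt+\iint\big(\tfrac23 a-\tfrac16 b'\big)u^6\,dxdt \ \ls\ E(T_1)+E(T_2)+\eps\,\LE ,
\]
the $E(T_j)$ arising from the boundary currents and the last term from $\mathcal E[u]$; absorbing $\eps\,\LE$ for $\eps$ small gives $\LE\ls E(T_1)+E(T_2)$. Substituting this into $E(T_2)\ls E(T_1)+\eps\iint\mathcal B[u]\,dxdt\ls E(T_1)+\eps\,\LE$ and absorbing once more gives $E(T_2)\ls E(T_1)$, and then $\LE\ls E(T_1)$, which is \eqref{LE}. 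I expect the main obstacle to be precisely the error bookkeeping of the second paragraph — organizing the identity so that no $\nabla^2 u$ appears, and verifying that every $h$- and $\pa h$-weighted contribution, interior and on the time slices, is genuinely absorbable (the $b\approx1$ weight being the tightest). The single essentially nonlinear point is the favorable sign of the $u^6$ bulk, which is a gift of the defocusing sign of $u^5$ together with the monotonicity of $b$.
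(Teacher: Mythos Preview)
Your proposal is correct and follows essentially the same route as the paper: the identical multiplier $a(r)u+b(r)\pa_r u+C\pat u$, the same error analysis absorbing the $h$- and $\pa h$-weighted terms into $\eps\,\LE$, the same key computation that the quintic source contributes the favorable bulk $(\tfrac23 a-\tfrac16 b')u^6\gtrsim \la r\ra^{-1}u^6$, and an approximation step to pass from classical to strong solutions. The only cosmetic differences are that the paper carries an auxiliary forcing term $F$ throughout the classical computation (to make the limiting argument cleaner via the bound $\|F\|_{L^1L^2}\|\nabla u\|_{L^\infty L^2}$) and closes the energy and local-energy bounds in a single combined inequality rather than via your two-step bootstrap, but the content is the same.
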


Let us first assume that $u$ is a classical solution to the equation
\[
Pu = u^5 + F
\]

Following \cite{mt1} and the discussion in the previous section, we multiply the equation by $a(r)u+b(r)\pa_r u + C\pa_t u$, with 
\[b(r) = \sum_{j=0}^\infty 2^{-j\gamma} \frac{r}{r + 2^j}, \quad a(r) = b(r)/r. \] 
Upon integrating by parts, we obtain 

	\begin{align*} E(T_2) + &\iint_{[T_1,T_2] \times \R^3} \frac{1}{2}b'(u_r^2 + u_t^2) - ( \frac{1}{2}b' - \frac{b}{r} ) | \pao u|^2 - \frac{\Delta a}{2} u^2 + Err\\
	& \ \ + (\frac23 a(r) - \frac16 b'(r))u^6 \,dxdt \lesssim E(T_1) + \iint_{[T_1,T_2] \times \R^3} |F|(|\pa u| + \frac{|u|}{\la r\ra}) \,dxdt.\end{align*} 
where the error satisfies
\[	
Err \lesssim (\frac{|h|}{\la x \ra} + |\nabla h|)(| \nabla u|^2 + |\nabla u|\frac{|u|}{\la x \ra})	
\]
	Since $\pa^J \hab \lesssim \epsilon \la r \ra^{-|J|-\gamma}$, we can estimate by Cauchy-Schwarz
\[
Err \lesssim \epsilon \Bigl(\frac{|\nabla u|^2}{\la r\ra^{1+\gamma}}+ \frac{u^2}{\la r\ra^{3+\gamma}}\Bigr) 
\]	

 Moreover,
\[
\frac23 a(r) - \frac16 b'(r) = \sum_{j=0}^\infty 2^{-j\gamma} \left(\frac{1}{2} \frac{1}{r + 2^j} +\frac{1}{6} \frac{r}{(r + 2^j)^2} \right) \gtrsim \frac1r\
\] 

Taking \eqref{LEest} into account, and applying H\"older and Hardy to control the inhomogeneity, we get
\begin{equation}\label{LEcl}
\|u\|_{LE^1[T_1,T_2]}^2 + E(T_2) \lesssim E(T_1) + \|F\|_{L^1[T_1, T_2]L^2}\|\nabla u\|_{L^{\infty}[T_1, T_2]L^2}
\end{equation}
	
Consider now a strong solution $u$, and a sequence of classical solutions $u_n$ so that $u_n(T_1)\to u(T_1)$ in the energy norm. After dividing the interval $I=[T_1, T_2]$ into finitely many intervals so that the $L^5L^{10}$ norm of $u$ is suitably small on each interval, a contraction argument shows that $u_n \to u$ in $X(I)$. In particular this implies that $u_n^5$ is a Cauchy sequence in $L^1[T_1, T_2]L^2$, and thus by \eqref{LEcl} we must have $u_n \to u$ in $LE^1[T_1,T_2]$. The desired conclusion \eqref{LE} now follows.

\section{$L^6$ norm decay of solutions in Minkowski space}

	In order to motivate the the next section, which contains the main result and its proof, in this section we shall present the highlights of the proof of $L^6$ norm decay in Minkowski space for the non-linear problem, as done in \cite{bs}. 
	
\subsection{More notation}
 First, we fix some notation which will be used for the rest of the paper. Let 
	$$\Gamma = \{(t,x) : |x| - c < t, t>0 \}$$
be a forward solid light cone with $c\geq 0$ to be determined and let $\Gamma(I) = \Gamma \cap (I \times \R^3)$ where $I \subset [0,\infty)$ is a time interval. Let $D(T) = \{ (t,x) : t = T, \, |x| - c < t \}$ denote its $t=T$ slices and let $L(I) = \{ (t,x) : t \in I, |x| - c = t \}$ denote the lateral boundary of $\Gamma(I)$ with $$L_c(I):= L(I)$$ also used for emphasis, but usually we shall simply write $L(I)$.

\ Consider next the Cauchy problem 
\begin{equation} \label{cp4}
\begin{cases}
\Box u = u^5 & (t,x) \in (0,\infty) \times \R^3 \\
u[0] \in \dot H^1 \times L^2
\end{cases}
\end{equation}

	We now sketch a proof of the $L^6$ norm decay in the energy space for solutions to \eqref{cp4} (see \cite{ss2}, \cite{bg} for more details). We will adapt this proof to the variable coefficient case in the next section.
	
	Given $\delta>0$, pick $c$ sufficiently large so that the energy in the exterior region $|x| > c$ is less than $\delta/2$. Now given any $c \geq 0$, the flux on the time interval $I$ is defined to be the integral on the lateral boundary arising from multiplying the equation $\Box u = u^5$ by $\pat u$, namely 
	$$\flux(I) = \int_{L(I)} \frac{1}{2}|\pao u|^2 + \frac{1}{2}|\pat u + \pa_r u|^2 + \frac{1}{6}u^6\,\frac{d\sigma}{\sqrt{2}}$$

It is clear that the flux is non-negative. As the upper and lower limits of $I$ approach infinity, the flux decays, as an application of the divergence theorem in the interior of the $\Gamma(I)$ region shows. More precisely, if $I=[T_1,T_2]$, one obtains $$E_{|x|<c+T_2}(T_2) = E_{|x|<c+T_1}(T_1) + \flux(I).$$

Thus $E_{|x|<c+t}(t)$ is monotone non-decreasing; moreover, it is bounded; therefore it converges to a limit as $t \to \infty$, as claimed.

We now multiply the \eqref{cp4} by $Xu := (t+c) \pat u + x^i \pa_i u + u$ and apply the divergence theorem in $\Gamma(I)$. We obtain
\begin{equation}
P(T_2) + \iint_{\Gamma(I)} \frac{u^6}{3} = P(T_1) + \int_{L(I)} (t+c) \left( \frac{Xu}{t+c}\right)^2 \,\frac{d\sigma}{\sqrt{2}}
\end{equation}	
where
 \begin{align*}
P(T):=\int_{D(T)}  \frac{t+c}{2} \left[   \left(  \frac{Xu}{t+c}   \right)^2 +  \left( |\grad u|^2 - ( \frac{x}{t+c}\cdot\grad u)^2 \right) \right] + \frac{u^2}{t+c}    +    \frac{t+c}{6} u^6 \, dx
\end{align*}

An application of 
H\"older shows that 

\begin{align*}
\int_{L(I)} (t+c)\left( \frac{Xu}{t+c}\right)^2 \,\frac{d\sigma}{\sqrt 2} 
	&\lesssim (T_2+c) \int_{L(I)} (\pat u+ \pa_r u)^2 \,d\sigma +\int \frac{u^2}{t+c} \, d\sigma \\
	&\lesssim (T_2+c) ( \| Lu \|_{L^2(L(I))}^2 + \| u\|_{L^6(L(I))}^2 )  
\end{align*}

In summary, 

\begin{align*}
T_2\int_{D(T_2)}u^6dx \lesssim P(T_2) +\iint_{\Gamma(I)} u^6 &\lesssim P(T_1) + (T_2+c) G(\flux(I)) \\
&\lesssim (T_1+c)E_{|x|<T_1+c}(T_1) + (T_2+c)G(\flux(I))
\end{align*}
	and $G(\theta):=\theta+\theta^{1/3}$ is a function that decays to zero as its argument decays to zero.  Take $T_1 = \delta T_2$ to see that, since $\delta$ was arbitrary and the flux decays,
	$$\limsup_{t \to \infty} \| u(t,\cdot) \|_{L^6(\R^3)} =0.$$

\section{$L^6$ norm decay and scattering of solutions on small asymptotically flat perturbations of Minkowski space}

We now come to the main result and its proof. 

\begin{theorem}[Main Theorem] \label{mainthm}
Let $u$ be the unique global strong solution of \eqref{cp3}. \begin{enumerate}
\item
We make the following assumptions on the perturbation $h$: 
\begin{equation}\label{deriv}
|\pa h |\lesssim \epsilon \im^{- 1 -\gamma}
\end{equation} 
\begin{equation}\label{goodh}
|h|\ls\eps \frac{\la t-r \ra^{1/2}}{\im^\gamma \la t+r\ra^{1/2}}
\end{equation}
\begin{equation}\label{badh}
|h^{\underline{L}\underline{L}} | \lesssim \eps\frac{\la t-r \ra}{\im^\gamma\la t+r\ra}
\end{equation}
 where $\gamma>0$ is an arbitrarily small constant and $\epsilon>0$ is a sufficiently small constant.
 
 Then \begin{equation}\label{potdecay}\limsup_{t\to\infty} \| u(t,\cdot) \|_{L^6(\R^3)} = 0.\end{equation}
\item
If in addition 
\begin{equation}\label{twoderiv}
|\pa^J h |\lesssim \epsilon \im^{-2- \gamma}, \quad |J|=2,
\end{equation}
then $u$ scatters in the energy space.
\end{enumerate}
\end{theorem}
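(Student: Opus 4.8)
## Proof Proposal

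The plan is to prove the two parts in sequence, following the structure of the Minkowski argument in Section 5 but carrying along the perturbation error terms using the decay hypotheses \eqref{deriv}--\eqref{badh} and the local energy decay estimate \eqref{LE} established in Section 4.

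\textbf{Part (1): $L^6$ decay.} First I would set up the same geometric picture: fix $\delta>0$, choose $c$ large so the exterior energy at time $0$ is $<\delta/2$, and work inside the forward cone $\Gamma(I)$. The first step is to establish flux monotonicity and decay. Multiplying $Pu = u^5$ by $\pat u$ and applying the divergence theorem in $\Gamma(I)$ now produces, in addition to $E_{D(T_2)}(T_2)$, $E_{D(T_1)}(T_1)$ and a (still nonnegative, up to a small correction) flux term on $L(I)$, a bulk error term of the schematic form $\iint_{\Gamma(I)} \pa h \cdot \pa u \cdot \pa u + \text{(lower order)}$. By \eqref{deriv} this is bounded by $\eps \iint |\nabla u|^2 \im^{-1-\gamma}$, which is controlled by $\eps\|u\|_{LE^1}^2$, hence by \eqref{LE} by $\eps E(T_1)$; since $E$ is uniformly bounded and the local energy norm over $[0,\infty)$ is finite, these error integrals over $[T_1,T_2]$ tend to $0$ as $T_1\to\infty$. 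This gives that the perturbed quantity $E_{|x|<c+t}(t)$ is monotone up to a summable error, hence converges, and that the perturbed flux decays. The second step is the conformal/Morawetz multiplier: multiply $Pu = u^5$ by $Xu = (t+c)\pat u + x^i\pa_i u + u$ and apply the divergence theorem in $\Gamma(I)$. This reproduces $P(T_2)$, $P(T_1)$, the good bulk term $\frac13\iint_{\Gamma(I)} u^6$ (from the $\frac23 a - \frac16 b'$ positivity already used in Section 4, here with $a = 1/(t+c)$, so one sees directly the $u^6$ coefficient is $\gtrsim 1/(t+c)$ inside the cone), the lateral term $\int_{L(I)}(t+c)(Xu/(t+c))^2$, plus a bulk error. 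The key point is that the multiplier $Xu$ grows like $t+c$, so the error terms carry a factor $t+c$; this is why the refined decay hypotheses \eqref{goodh} and \eqref{badh} with the $\la t-r\ra/\la t+r\ra$ gain are needed — inside $\Gamma(I)$, where $r < t+c$, we have $\la t+r\ra \approx t+c$, so $(t+c)|h| \ls \eps \la t-r\ra^{1/2} \im^{-\gamma} (t+c)^{1/2}$ and $(t+c)|\hll| \ls \eps \la t-r\ra \im^{-\gamma}$, and these combine with $|\nabla u|^2$ or $u^6$ to give terms absorbable either into $\|u\|_{LE^1}^2$ or into the good $\iint u^6$ term (the $\hll$ component being the dangerous one, controlled precisely by \eqref{badh}). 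The lateral term is handled exactly as in Minkowski by Hölder: it is $\ls (T_2+c)(\|Lu\|_{L^2(L(I))}^2 + \|u\|_{L^6(L(I))}^2) \ls (T_2+c)\,G(\flux(I))$ with $G(\theta) = \theta + \theta^{1/3}$. Combining, $T_2\int_{D(T_2)} u^6 \ls P(T_1) + (T_2+c)G(\flux(I)) + (\text{small errors}) \ls (T_1+c)E_{|x|<T_1+c}(T_1) + (T_2+c)G(\flux(I))$; setting $T_1 = \delta T_2$, dividing by $T_2$, and sending $T_2\to\infty$ then $\delta\to 0$ gives $\limsup_t \|u(t)\|_{L^6} = 0$ inside the cone, and the exterior contribution is $<\delta$ by choice of $c$, so \eqref{potdecay} follows.

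\textbf{Part (2): scattering.} With \eqref{potdecay} in hand and the extra second-derivative bound \eqref{twoderiv}, the plan is the standard Duhamel/Cauchy-criterion argument. From \eqref{potdecay} and the boundedness of the energy, interpolation (or a Strichartz estimate on the variable-coefficient background, available here because \eqref{deriv}--\eqref{twoderiv} give the local energy decay needed for the Strichartz estimates of \cite{mt2}) yields $\|u\|_{L^5([T,\infty))L^{10}} \to 0$ as $T\to\infty$. Then for the wave flow $S(t,s)$ one checks that $v(t) := S(0,t)(u(t),\pat u(t))$ — the data pulled back to time $0$ — is Cauchy in $\dot H^1\times L^2$ as $t\to\infty$: the difference $v(t_2)-v(t_1)$ is, by \eqref{strong} and the group property of $S$, given by a Duhamel integral of $u^5$ over $[t_1,t_2]$, whose energy norm is $\ls \|u^5\|_{L^1[t_1,t_2]L^2} = \|u\|_{L^5[t_1,t_2]L^{10}}^5 \to 0$. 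Here one uses the energy estimate for the \emph{inhomogeneous} linear equation $Pw = G$, which is exactly \eqref{LEcl}/\eqref{LE} applied to the difference, so \eqref{twoderiv} enters only through ensuring the linear theory (energy bounds for $S(t,s)$, well-posedness of the backward problem) is robust. The limit $(f,g) := \lim_{t\to\infty} v(t)$ is the scattering data, and $\|u(t) - S(t,0)(f,g)\|_{\dot H^1\times L^2} = \|v(t) - (f,g)\|_{\dot H^1\times L^2} \to 0$ after again invoking uniform boundedness of $S(t,0)$ on the energy space.

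\textbf{Main obstacle.} The crux is the error analysis in the conformal multiplier step of Part (1): because $Xu$ grows linearly in $t$, naive bounds on the perturbation lose a full power of $t$, and it is precisely here that the anisotropic hypotheses \eqref{goodh}, \eqref{badh} — with their $\la t-r\ra$ numerators vanishing near the light cone and the separate, weaker requirement on the $\underline L\,\underline L$ component — are tailored to make every error term either fit inside $\eps\|u\|_{LE^1}^2$ (absorbed via \eqref{LE}) or inside a small multiple of the positive $\iint_{\Gamma(I)} u^6$ term. Verifying that the null structure of the error (which components of $\nabla u$ pair with which components of $h$) cooperates with these weights — in particular that the genuinely dangerous $\hll (\pa_r u + \pat u)^2$-type terms are the ones controlled by \eqref{badh} and the flux — is the technical heart of the argument.
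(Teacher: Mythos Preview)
Your overall strategy --- control the flux, then run a conformal multiplier $Xu=(t+c)\partial_t u + x\cdot\nabla u + u$ inside the cone --- matches the paper. But there is a genuine gap in how you locate and handle the perturbative errors, and it is precisely the point you flag as the ``main obstacle''.

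\textbf{Where the refined hypotheses are actually used.} You place the use of \eqref{goodh}--\eqref{badh} in the \emph{bulk} of $\Gamma(I)$, arguing that the factor $t+c$ from $Xu$ is compensated by the $\la t-r\ra/\la t+r\ra$ gain. In fact the bulk error in the $Xu$ identity is $\frac12\bigl((X-1)h^{\alpha\beta}\bigr)\partial_\alpha u\,\partial_\beta u$, and since $(X-1)h=(t+c)\partial_t h + x\cdot\nabla h$, the hypothesis \eqref{deriv} alone gives $|(X-1)h|\lesssim t\,\epsilon\,\la x\ra^{-1-\gamma}$, hence a bulk bounded by $T_2\,\epsilon\,\|u\|_{LE^1[T_1,T_2]}^2$. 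The refined decay \eqref{goodh}--\eqref{badh} is needed not in the bulk but on the \emph{lateral boundary} $L_c(I)$, where the perturbative term from the $Xu$ multiplier is $\int_{L_c}\nu_\alpha h^{\alpha\beta}\partial_\beta u\,Xu$, and on $L_c$ one has $|t-r|=c$ so that \eqref{goodh}--\eqref{badh} read $|h|\lesssim\epsilon\la c\ra^{1/2}\la x\ra^{-1/2-\gamma}$ and $|h^{\underline L\underline L}|\lesssim\epsilon\la c\ra\la x\ra^{-1-\gamma}$.

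\textbf{The dangerous term is misidentified.} You write that the dangerous lateral contributions are ``$h^{\underline L\underline L}(\partial_r u+\partial_t u)^2$-type'', controlled by \eqref{badh} and the flux. But $\partial_r u+\partial_t u = Lu$ is a \emph{good} (tangential) derivative, already in the flux; such a term would be harmless. The actual null decomposition on $L_c$ is $\nu_\alpha h^{\alpha\beta}\partial_\beta = -\tfrac12 h^{\underline L\underline L}\,\underline L + O(h)\,\bar\partial$, so the dangerous piece is $h^{\underline L\underline L}\,\underline L u\cdot Xu$, with the \emph{bad} transversal derivative $\underline L u = \partial_r u - \partial_t u$, which the flux does \emph{not} control. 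The same $h^{\underline L\underline L}\,(\underline L u)^2$ contribution already appears in the $\partial_t u$ multiplier step when you try to establish flux decay.

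\textbf{The missing ingredient.} To bound $\int_{L_c}|h^{\underline L\underline L}|\,|\underline L u|\,|Xu|$ one needs, after Cauchy--Schwarz and \eqref{badh}, control of $\la c\ra\int_{L_c}|\nabla u|^2\la x\ra^{-1-\gamma}$. This is not available for an arbitrary fixed $c$ chosen from exterior-energy considerations at $t=0$. The paper handles this by an \emph{averaging argument}: one chooses $c\in[R,R+1]$ so that
\[
\int_{L_c([T_1,T_2])}\frac{|\nabla u|^2}{\la x\ra^{1+\gamma}}\,d\sigma \le \|u\|_{LE^1[T_1,T_2]}^2,
\]
which is possible because the spacetime $LE^1$ norm is an integral over all such cones. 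This, together with the smallness of $\|u\|_{LE^1[T_1,\infty)}$ for $T_1$ large, is what closes the lateral estimates; without it your argument for Part~(1) does not go through as written. Your Part~(2) sketch is essentially correct and matches the paper.
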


Recall that we define the normal derivative to the cone
$$\underline{L} = \frac{x^i}{|x|}\pai - \pa_0 = \frac{x^i}{|x|}\pai - \pat$$
and
$$\hll=h^{\alpha\beta}\underline{L}_\alpha \underline{L}_\beta = h^{00} - 2 \sum_i h^{0i} \frac{x^i}{|x|} + \sum_{i, j} h^{ij} \frac{x^i x^j}{|x|^2}$$.

We also remark that the decay rates on $h$ and $\partial h$ are consistent with the ones required for local energy decay Theorem~\ref{LE} except near the cone $t\approx |x|$, where we need better decay rates to close the argument.

\begin{proof} 

Recall that we define $$G(\theta):=\theta+\theta^{1/3}.$$

The main estimate 
in this proof is the following: 

For any $R\geq 0$ and $1< T_1$ \footnote{We shall only be interested in certain sufficiently large values of $T_1$ and $R$.},  and any $T_1+R <T_2$,
we have \begin{equation} \label{mainest}
\int_{\R^3}u^6(T_2,x)dx \ls \frac{T_1+R+1}{T_2} E_{\{|x|<T_1+R+1\}} (T_1)  + \frac{E}{T_2^\gamma} 		+ G\Bigl(E_{\{|x|>T_1+R\}}(T_1) + \la R \ra \|u\|^2_{LE^1[T_1,T_2]}\Bigr)   
\end{equation} 

Here, and for the rest of the proof, the implicit constant does not depend on $T_1$, $R$, $T_2$, or $\epsilon$.

Let us finish the proof of \eqref{potdecay}, assuming that \eqref{mainest} holds. Pick any $\tilde \eps>0$, and let $T_1$ be large enough such that $$\|u\|^2_{LE^1[T_1,\infty)} < \tilde\eps;$$ such a number may be found because of the local energy estimate \eqref{LE}. Next pick $R$ large enough so that
\[
E_{\{|x|>T_1+R\}}(T_1) < \tilde \eps
\]

Now let $T_2\to\infty$ in \eqref{mainest}. We obtain
\[
\limsup_{T_2\to\infty} \int_{\R^3}u^6(T_2,x)dx \lesssim \tilde \eps + \tilde\eps^{1/3}
\]
and \eqref{potdecay} follows by letting $\tilde \eps\to 0$.

Let us first assume that $u$ is a classical solution to the equation
\begin{equation}\label{approx}
Pu = u^5 + F
\end{equation}

We will prove that
\begin{equation}\label{potdecay2}
\begin{split}
& \int_{\R^3}u^6(T_2,x)dx \ls \frac{T_1+R+1}{T_2} E_{\{|x|<T_1+R+1\}}(T_1)  + \frac{E}{T_2^\gamma} \\ &
	 + G\Bigl(E_{\{|x|>T_1+R\}}(T_1) + \la R \ra \|u\|^2_{LE^1[T_1,T_2]}  + \|F\|_{L^1[T_1, T_2]L^2}\|\nabla u\|_{L^{\infty}[T_1, T_2]L^2}\Bigr)
\end{split}\end{equation} where $E:=E(0)$.

A similar argument as the one in Section 4 allows us to pass to the limit and deduce \eqref{mainest} from \eqref{potdecay2}.

We first observe that by averaging we know that there is $c\in [R, R+1]$ so that
\begin{equation}\label{ave}
\int_{L_c([T_1,T_2])} \frac{|\nabla u|^2}{\im^{1+\gamma}}\, d\sigma \leq \|u\|^2_{LE^1[T_1,T_2]}.
\end{equation}

For the rest of this proof, fix $c$ as above. Note that the hypothesis $T_1+R < T_2$ implies that $T_2 \approx T_2+c$.

\subsection{$L^6$ norm decay of solutions on spacelike slices exterior to the cone}

The next lemma shows that we can control both the outside energy and the flux through $L_c$. Note that, unlike in the Minkowski case, it is not clear that this can be done for all $c$.

\begin{lemma} \label{6.4}
Let $u$ solve \eqref{approx}. Then 
\begin{equation}\begin{split}\label{extu6}
E_{\{|x|>T_2+c\}}(T_2) + \flux[T_1,T_2] \ls E_{\{|x|>T_1+c\}}(T_1) + \la R\ra \|u\|^2_{LE^1[T_1,T_2]} \\ + \|F\|_{L^1[T_1, T_2]L^2}\|\nabla u\|_{L^{\infty}[T_1, T_2]L^2}.
\end{split}\end{equation}

Here, 
\[ \flux[T_1,T_2]:=\int_{L_c([T_1,T_2])} \frac{1}2 |\bar\pa u|^2 + \frac{1}6u^6\,\frac{d\sigma}{\sqrt{2}} \]
and
$$\bar\pa u:=\{Lu, (r\sin\phi)^{-1}\pa_\theta, r^{-1}\pa_\phi\}, \quad L=  \frac{x^i}{|x|}\pai + \pat$$ denote the tangential derivatives of $u$ to the light cone.
\end{lemma}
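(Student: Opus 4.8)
The plan is to prove Lemma~\ref{6.4} by the same vector-field multiplier argument used in the Minkowski sketch of Section~5, namely multiply \eqref{approx} by $\pat u$ and integrate by parts over the \emph{exterior} region $\{(t,x): t\in[T_1,T_2],\ |x| - c > t\}$, rather than over the interior cone $\Gamma([T_1,T_2])$. The boundary of this exterior region consists of the two spacelike slices $\{t=T_i,\ |x|>T_i+c\}$, which produce the exterior energies $E_{\{|x|>T_i+c\}}(T_i)$, and the lateral null boundary $L_c([T_1,T_2])$, which produces exactly $\flux[T_1,T_2]$ (the null components $\bar\pa u$ and the $u^6$ term appear because on the cone the good derivatives are the tangential ones). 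In Minkowski space this identity is exact; here the perturbation $h=g-m$ generates bulk error terms in the spacetime integral, plus the inhomogeneity $F$ contributes $\iint F\pat u$.

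The key steps, in order: (i) write $Pu = \Box u + \paa(\hab\pab u)$ and record the divergence identity for $\Box u \cdot \pat u$, which is $\pa_\mu Q^{\mu 0}$ for the energy-momentum tensor $Q$ of the flat d'Alembertian; (ii) apply the divergence theorem on the exterior region, carefully computing the outward conormal on each piece of the boundary — on $L_c$ the conormal is null and one checks that $Q(\pat,\cdot)$ contracted against it yields precisely the nonnegative combination $\frac12|\bar\pa u|^2 + \frac16 u^6$ after accounting for the $\tfrac{d\sigma}{\sqrt2}$ normalization, using that the nonlinearity $u^5\pat u = \pat(u^6/6)$ is itself a divergence; (iii) bound the perturbative bulk error by $|Err|\lesssim (\tfrac{|h|}{\im}+|\nabla h|)|\nabla u|^2$, as in Section~4; the new point is that in the exterior region $\{|x|>c+t\geq c+T_1\}$ one has $|x|\gtrsim \la R\ra$ and also $\la t-r\ra\gtrsim\la t+r\ra$ is \emph{not} available — rather $|t-r|>c$ so $\la t - r\ra \gtrsim \la c\ra \approx \la R\ra$, but crucially in this exterior region one is away from the cone, so \eqref{goodh} gives $|h|\lesssim \epsilon\im^{-\gamma}$ with no loss, and in fact one only needs to absorb $Err$ into $\la R\ra\|u\|^2_{LE^1[T_1,T_2]}$ up to the small factor $\epsilon$; (iv) handle the $F$ term by Cauchy--Schwarz and Hardy exactly as in \eqref{LEcl}, giving $\|F\|_{L^1L^2}\|\nabla u\|_{L^\infty L^2}$; (v) finally pass from classical to strong solutions by the same density/contraction argument indicated at the end of Section~4.

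I expect the main obstacle to be step (iii): controlling the bulk error term in the \emph{exterior} region and showing it is genuinely absorbed by $\la R\ra\|u\|^2_{LE^1[T_1,T_2]}$ with a gain of $\epsilon$. The subtlety flagged in the remark before the lemma — ``it is not clear that this can be done for all $c$'' — suggests that the averaging choice \eqref{ave} of $c\in[R,R+1]$ is exactly what is needed to convert the lateral-boundary error terms (which involve $|\nabla u|^2$ restricted to $L_c$) into something controlled by $\|u\|^2_{LE^1}$; without a good $c$ the flux identity would pick up an uncontrolled boundary contribution from the error. So I would be careful to keep track of which error terms live in the spacetime bulk (absorbable by $\epsilon\|u\|^2_{LE^1[T_1,T_2]}$ using \eqref{deriv}–\eqref{goodh} and the fact that $\im\gtrsim\la R\ra$ there) versus which live on $L_c$ (controlled by \eqref{ave}, hence the factor $\la R\ra$). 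A secondary technical point is verifying the sign of the $u^6$ coefficient on the lateral boundary and that no bad-component $\hll$ term survives there — this is where assumption \eqref{badh} should enter, ensuring the perturbation of the null structure on the cone is harmless.
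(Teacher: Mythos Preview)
Your plan is correct and matches the paper's proof: multiply \eqref{approx} by $\pat u$, integrate over the exterior region, and control the perturbative errors on the time slices (absorbed by $\epsilon E_{\{|x|>T_j+c\}}(T_j)$), in the bulk (only $\frac12\pat\hab\paa u\pab u$ survives, absorbed by $\|u\|^2_{LE^1}$ via \eqref{deriv}), and on $L_c$ (via the null decomposition $\nu_\alpha \hab\pab = -\tfrac12\hll\,\underline L + O(h)\bar\pa$ together with \eqref{goodh}, \eqref{badh}, and \eqref{ave}). The only adjustment is one of emphasis: the bulk error is not the obstacle, and the real work --- and the source of the $\la R\ra$ factor and of the special choice of $c$ --- lies entirely on the lateral boundary, where the $\hll(\underline L u)^2$ term \emph{does} survive but is bounded by $\la R\ra\int_{L_c}\tfrac{|\nabla u|^2}{\im^{1+\gamma}}\,d\sigma$ thanks to \eqref{badh} (which gives $|\hll|\lesssim\la R\ra\im^{-1-\gamma}$ on the cone $|t-r|=c\approx R$) and then by $\la R\ra\|u\|^2_{LE^1}$ via \eqref{ave}.
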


\begin{proof}
Let $I = [T_1,T_2]$. Multiplying both sides of the equation in \eqref{approx} by $\pat u$, we obtain the identity $$\paa(\gab\pab u \pat u) - \frac{1}{2}\pat(\gab\pab u \paa u) + \frac{1}{2}\pat\gab\pab u \paa u = \frac{1}{6}\pat (u^6) + F\pat u.$$ Define 
\[\gext(I) := \{T_1 \leq t \leq T_2, |x|>t+c\}, \quad D(T)^c := \gext(I)\cap\{t=T\}\] 

Applying the divergence theorem within the region $\gext(I)$ leads to 
\begin{equation} \label{6.4.1} \begin{split} \iint_{\gext(I)} F\pat u \, dxdt  =  & \iint_{\gext(I)} \frac{1}{2}\pat\gab\pab u \paa u \,dxdt +\\ & \int_{\pa \gext(I)} \nua\gab\pab u \pat u - \frac{1}2\nu_0\gab\pab u \paa u - \frac{1}6 \nu_0 u^6\,d\sigma.\end{split} \end{equation}

Next, let $\bd$
denote $$\bd = \nua\hab\pab u \pat u - \frac{1}2\nu_0\hab\pab u \paa u;$$  clearly $\bd$ depends on the domain of integration. 
Expanding \eqref{6.4.1}, we have
\begin{equation} \label{6.4.2} \begin{split}
& E_{\{|x|>T_2+c\}}(T_2)+\int_{D(T_2)^c}\bd\,dx + \flux(I) + \iint_{\gext(I)} \frac{1}{2}\pat\hab\paa u \pab u \,dxdt\\ &= \iint_{\gext(I)} F\pat u \, dxdt + E_{\{|x|>T_1+c\}}(T_1) + \int_{D(T_1)^c}\bd \,dx  + \int_{L_c(I)}\bd\,d\sigma 	
\end{split}\end{equation} 

The space-time term is easy to estimate by \eqref{deriv} 
\begin{equation}\label{spacetime}
\iint_{\gext(I)} \frac{1}{2}\pat\hab\paa u \pab u \,dxdt\ls \iint_{\gext(I)} \frac{|\nabla u|^2}{\im^{1+\gamma}}\,dxdt\leq \|u\|^2_{LE^1[T_1, T_2]}.
\end{equation}

Similarly, using that $|h|\lesssim \epsilon$, we obtain
\begin{equation}\label{timeslice}
\int_{D(T_j)^c}\bd\,dx \lesssim \epsilon E_{\{|x|>T_2+c\}}(T_j), \quad j=1,2.
\end{equation} 
which can be absorbed in $E_{\{|x|>T_2+c\}}(T_j)$ for small enough $\epsilon$.
 
Finally, we need to estimate the perturbative error term on the lateral boundary; this is where we will use \eqref{goodh} and \eqref{badh}. We write 
\begin{align*} 
& \nu_\alpha \hab \pab = -\frac12 \hll \underline{L} + O(h)\bar\pa \\ 
& \pat = \frac12(L-\underline{L}) \\
&\hab \paa u \pab u= \frac14 \hll (\underline{L} u)^2 + O(h)\bar\pa u \pa u 
\end{align*} 

Note that, due to \eqref{badh} and \eqref{goodh} we have that on $L(I)$
\begin{equation}\label{hcone}
\hll \lesssim \frac{\la R\ra}{\la x\ra^{1+\gamma}} , \quad |h| \lesssim \frac{\epsilon\la R\ra^{1/2}}{\la x\ra^{1/2+\gamma}}
\end{equation}
and thus by Cauchy-Schwarz
\begin{equation}\label{lat}
\int_{L_c(I)}\bd\,d\sigma \lesssim \int_{L_c(I)} |\hll| (\underline{L} u)^2 + |h| |\bar\pa u| |\pa u| d\sigma \lesssim \epsilon \flux(I) + \la R\ra\int_{L_c(I)} \frac{|\nabla u|^2}{\im^{1+\gamma}}\, d\sigma 
\end{equation}

The conclusion of the lemma now follows from \eqref{ave}, \eqref{6.4.2}, \eqref{spacetime}, \eqref{timeslice}, and \eqref{lat}.
\end{proof}

\subsection{$L^6$ norm decay of solutions on interior spacelike slices of the cone}

The objective within this section is to show that solutions to \eqref{approx} satisfy
\begin{equation}\begin{split}\label{intu6}
\int_{D(T_2)}u^6(T_2,x)dx & \ls \frac{T_1+R+1}{T_2} E_{\{|x|<T_1+c\}}(T_1) + \frac{E}{T_2^{\gamma}} + G(\flux[T_1, T_2]) + \\& \hspace{1cm} \la R \ra \|u\|^2_{LE^1[T_1,T_2]}   + \|F\|_{L^1[T_1, T_2]L^2}\|\nabla u\|_{L^{\infty}[T_1, T_2]L^2}
\end{split}\end{equation}

The desired estimate \eqref{potdecay2} now follows from \eqref{extu6} and \eqref{intu6}.

To prove \eqref{intu6}, we multiply both sides of \eqref{approx} by $Xu$ where $X := S + c\pat + 1$ and $S$ is the scaling vector field  and obtain 
\begin{equation} \label{sc}
\begin{split}
\paa(\gab \pab u Xu) - \frac{1}{2} \pat((t+c) \gab \pab u \paa u) - \frac{1}{2} \pai(x^i \gab \pab u \paa u) \\ + \frac{1}{2} ((X-1) \gab) \paa u \pab u 
	 = \pat((t+c)\frac{u^6}6) + \pai(x^i\frac{u^6}6) -\frac{u^6}3 + F Xu.
\end{split}\end{equation} 	
	Indeed, by the symmetry of $\gab$, 
$$\paa(\gab \pab u \pat u) - \frac{1}{2} \pat(\gab \pab u \paa u) + \frac{1}{2} \pat \gab \paa u \pab u = (Pu) \pat u.$$
and similarly
\begin{align*}\paa(\gab \pab u t \pat u) - \frac{1}{2} \pat ( t \gab \pab u \paa u) - g^{0\beta} \pab u \pat u + \frac{1}{2} \gab \paa u \pab u \\+ \frac{1}{2} t \pat \gab \pab u \paa u = (Pu)t\pat u\end{align*}

\begin{align*}\paa(\gab \pab u x^j \pa_j u) - g^{j\beta} \pab u \paj u - \frac{1}{2} \paj(\gab \pab u x^j \paa u) + \frac{1}{2} x^j \paj \gab \pab u \paa u \\ + \frac{3}{2} \gab \pab u \paa u = (Pu)(x^j\pa_j u)\end{align*}
$$\paa(\gab u\pab u)-\gab\pab u \paa u=(Pu)u.$$
The nonlinear term follows in a similar manner. Upon summing these terms we obtain \eqref{sc}.

We now integrate \eqref{sc} on $\Gamma(I)$ and apply the divergence theorem. We obtain
\begin{align*}
\iint_{\Gamma(I)} \frac{u^6}3 + \frac{1}{2} ((X-1) \gab) \paa u \pab u - F Xu \, dxdt = \int_{\pa \Gamma(I)} \nu_\alpha \gab \pab u Xu - \\ \frac{1}{2} \nu \cdot (t+c,x) \gab \pab u \paa u - \frac{1}{2} \nu \cdot (t+c,x) \frac{u^6}6\, d\sigma
\end{align*}
	
Recall that on $L(I)$ the outward unit normal vector $\nu$ to $L(I)$ is $(-1,x/|x|)/\sqrt{2}$, and thus $\nu \cdot (t+c,x) = 0$ on $L(I)$. The boundary term can be now written more explicitly as \[
-P(T_2) + P(T_1) + \flux(I)	+ BDR_h 
	\]
where the first three terms come from the Minkowski case, and
\begin{align*}
BDR_h = & \int_{D(T_2)} h^{0\beta}\pab uXu - \frac{1}{2}(t+c) \hab \pab u \paa u \,dx \\ & - \int_{D(T_1)} h^{0\beta}\pab uXu  -\frac{1}{2}(t+c) \hab \pab u \paa u \,dx 
 + \int_{L(I)} \nu_\alpha \hab \pab u Xu\, dx 
\end{align*}	
	  
As explained in Section 5, we know that
\[
P(T_2) \gtrsim T_2 \int_{D(T_2)}u^6(T_2,x)dx
\]
\[
P(T_1) \lesssim (T_1+c) E_{\{|x|<T_1+c\}}(T_1)
\]
We can also make the trivial estimate
\[
\iint_{\Gamma(I)} F Xu \, dxdt \lesssim T_2 \|F\|_{L^1[T_1, T_2]L^2}\|\nabla u\|_{L^{\infty}[T_1, T_2]L^2}
\]	 
Moreover, our assumptions on $\hab$ immediately imply that 
\[
(X-1)\hab \lesssim t\im^{-1-\gamma}
\] 
and thus
\begin{equation}
\iint_{\Gamma(I)} | (X-1)\gab \paa u \pab u | \,dxdt \lesssim T_2 \iint_{\Gamma(I)} \frac{|\nabla u|^2}{\im^{1+\gamma}}\,dxdt \leq T_2 \|u\|^2_{LE^1[T_1,T_2]} 
\end{equation} 

The conclusion \eqref{intu6} will follow if we show that	
\[
BDR_h \lesssim \epsilon (P(T_2)+P(T_1)) + T_2^{1-\gamma} E + T_2 \left( G(\flux(I)) + \la R\ra \LE \right)
\]	

\

Let us write
\[
D(T_2) = D_{int}(T_2) \cup D_{ext}(T_2)
\]
where
\[
D_{int}(T_2) = D(T_2)\cap\{|x|\leq \frac{T_2+c}2\}, \quad D_{ext}(T_2) = D(T_2)\cap\{|x|\geq \frac{T_2+c}2\},
\]

Since $|h| \lesssim \epsilon$ in $D_{int}$, we have that
\[
\int_{D_{int}(T_2)} h^{0\beta}\pab uXu - \frac{1}{2}(t+c) \hab \pab u \paa u \,dx \lesssim \epsilon P(T_2)
\]

On the other hand, $|h|\lesssim \frac{1}{T_2^{\gamma}}$ in $D_{ext}$, and thus by the boundedness of energy
\[
\int_{D_{ext}(T_2)} h^{0\beta}\pab uXu - \frac{1}{2}(t+c) \hab \pab u \paa u \,dx \lesssim T_2^{1-\gamma} E(T_2) \lesssim T_2^{1-\gamma} E
\]

Adding the last two inequalities we obtain
\[
\int_{D(T_2)} h^{0\beta}\pab uXu - \frac{1}{2}(t+c) \hab \pab u \paa u \,dx \lesssim \epsilon P(T_2) + T_2^{1-\gamma} E
\]

Similarly we can show that
\[
\int_{D(T_1)} h^{0\beta}\pab uXu - \frac{1}{2}(t+c) \hab \pab u \paa u \,dx \lesssim \epsilon P(T_1) + T_1^{1-\gamma} E
\]

\

We are left with dealing with the lateral terms. We will show that
\begin{equation}\label{latS}
\int_{L(I)} \nu_\alpha \hab \pab u Xu\, d\sigma \lesssim T_2 \left( G(\flux(I)) + \la R\ra \LE \right)
\end{equation}

We first remark that $Xu = (rL + 1)u$ on $L(I)$, and we again write
\begin{equation}
\nu_\alpha \hab \pab u  = -\frac12 \hll \underline{L}u + O(h)\bar\pa u
\end{equation} 

Note that \eqref{hcone} in particular imply the weaker estimates
\[
\hll \lesssim \frac{\la R\ra^{1/2}}{\la x\ra^{1/2+\gamma}}, \quad h \lesssim 1
\]

We can now estimate by Cauchy-Schwarz, \eqref{hcone} and the fact that $r\leq T_2 + c \lesssim T_2$:

\begin{align*}
\int_{L(I)} |\hll \underline L u (rLu)|\,d\sigma &\lesssim T_2\int_{L(I)} |\frac{\la R\ra^{1/2}}{\im^{1/2+\gamma}}\underline Lu Lu|\,dx \\
	&\leq T_2 \left(\la R\ra \int_{L(I)} \frac{|\nabla u|^2}{\im^{1+\gamma}} \,dx + \int_{L(I)} |Lu|^2\,dx \right) \\
	&\lesssim T_2 (\la R\ra \|u\|^2_{LE^1[T_1,T_2]} + \flux[T_1, T_2])
	\end{align*} 
where in the last inequality we used \eqref{ave}.
		
Similarly, 
\begin{align*}
\int_{L(I)} |h \bar\pa u (rLu)|\,dx &\lesssim T_2 \int_{L(I)} |\bar\pa u Lu|\, dx \\
&\leq T_2\int_{L(I)} |\bar\pa u|^2\,dx \lesssim T_2 \flux[T_1, T_2]
\end{align*} 
	
Thirdly, by an application of \eqref{hcone}, Cauchy-Schwarz and then H\"older's inequality,
\begin{align*}
\int_{L(I)} |\hll \underline Lu u|\,dx 
	&\ls \int_{L(I)} \frac{\la R \ra^{1/2}}{\im^{1/2+\gamma}} |\underline L u| |u| \,dx  \\
	&\ls \left(\int_{L(I)} \la R \ra \frac{\ed}{\im^{1+\gamma}}\right)^{1/2} \left(\int_{L(I)} u^2 \,dx\right)^{1/2} \\
	&\ls  	\left(\int_{L(I)} \la R \ra \frac{\ed}{\im^{1+\gamma}}\right)^{1/2}  \|u\|_{L^6( L(I))} T_2 \\
	&\ls T_2\left( \la R \ra \LE + \flux(I)^{1/3} \right)
\end{align*} 
where again we used \eqref{ave}.	
	
Finally, 
	\begin{align*}
	\int_{L(I)} | h \bar\pa u u|dx 
		&\ls (\int_{L(I)} |\bar\pa u|^2)^{1/2} (\int_{L(I)} u^2dx)^{1/2} \\
		&\ls (\int_{L(I)} |\bar\pa u|^2)^{1/2} (\int_{L(I)} |u|^6 dx)^{1/6} T_2 \\
		&\ls T_2 G(\flux[T_1, T_2])
	\end{align*} 

The last four estimates now imply \eqref{latS}, which finishes the proof of \eqref{intu6}.

\subsection{Scattering}
To obtain part (2) of the theorem, note that if $\pa^J h \lesssim \eps \im^{-|J|-\gamma}$ where $|J| \leq 2$ (which are implied by our assumptions in the main theorem), then global Strichartz estimates are implied by a refinement of the local energy decay estimates (see Theorem 6 in \cite{mt2}). Then, for any $\eta>0$, by choosing a sufficiently large number $T>0$, we obtain $\| w \|_{L^5L^{10}([ T,\infty) \times \R^3)} \leq \eta$ where $w$ solves \eqref{main}. For any $\tilde w$ with $\| \tilde w \|_{L^5L^{10}([ T,\infty) \times \R^3)} \leq \eta$, let $W$ be the solution to $$P W = (w + \tilde w)^5$$ with $$\lim_{t\to\infty} \|\nabla W(t,\cdot)\|_{L^2(\R^3)} =0.$$ As $\eta>0$ was arbitrary, we may select $\eta$ sufficiently small so that the map $\tilde w \mapsto W$ is a contraction mapping, so that for any finite energy solution $w$ of \eqref{main}, there exists a unique solution to \eqref{linear} such that their difference vanishes in the $\dot H^1 \times L^2$ norm as $t \to \infty$, and we conclude that the solution scatters in the energy space.

\end{proof}

\end{document}